\newtheorem{thm}{Theorem}[section]
\newtheorem{lem}[thm]{Lemma}
\theoremstyle{definition}
\theoremstyle{remark}
\theoremstyle{plain}
\theoremstyle{remark}
\newtheorem*{example}{Example}
\numberwithin{equation}{section}
\begin{document}

\title{ \textbf{Enumeration of some particular  $ 2n\times 10 $  n-times  Persymmetric  Matrices over $\mathbb{F}_{2} $ by rank}}
\author{Jorgen~Cherly}
\address{D\'epartement de Math\'ematiques, Universit\'e de
    Brest, 29238 Brest cedex~3, France}
\email{Jorgen.Cherly@univ-brest.fr}
\email{andersen69@wanadoo.fr}

\maketitle 
\begin{abstract}
Dans cet article nous comptons le nombre de certaines $2n\times 10$  n-fois matrices persym\' etriques de rang i sur $ \mathbb {F} _ {2} . $

 \end{abstract}

\selectlanguage{english}

\begin{abstract}  In this paper we count the number of some particular $2n\times 10$ n-times persymmetric rank i matrices over  $ \mathbb{F}_{2}.$
 \end{abstract}
 
   \maketitle 
\newpage
\tableofcontents
\newpage

   \section{Introduction.}
  \label{sec 1}  
  In this paper we propose to compute  the number $ \Gamma_{i}^{\left[2\atop{\vdots \atop 2}\right]\times 10}$  of rank i  $2n\times 10 $
   n-times persymmetric matrices over $\mathbb{F}_{2}$ of the below form for $0\leqslant i\leqslant \inf(2n,10) $  \\
    \begin{equation}
  \label{eq 3.1}
   \left (  \begin{array} {cccccccccc}
\alpha  _{1}^{(1)} & \alpha  _{2}^{(1)}  &   \alpha_{3}^{(1)} &   \alpha_{4}^{(1)} &   \alpha_{5}^{(1)} &  \alpha_{6}^{(1)}  & \alpha_{7}^{(1)}  & \alpha_{8}^{(1)} & \alpha_{9}^{(1)} & \alpha_{10}^{(1)}\\
\alpha  _{2}^{(1)} & \alpha  _{3}^{(1)}  &   \alpha_{4}^{(1)} &   \alpha_{5}^{(1)} &   \alpha_{6}^{(1)} &  \alpha_{7}^{(1)} & \alpha_{8}^{(1)}  &  \alpha_{9}^{(1)}& \alpha_{10}^{(1)} &\alpha_{11}^{(1)} \\ 
\hline \\
\alpha  _{1}^{(2)} & \alpha  _{2}^{(2)}  &   \alpha_{3}^{(2)} &   \alpha_{4}^{(2)} &   \alpha_{5}^{(2)} &  \alpha_{6}^{(2)} &  \alpha_{7}^{(2)}  & \alpha_{8}^{(2)} & \alpha_{9}^{(2)} &\alpha_{10}^{(2)} \\
\alpha  _{2}^{(2)} & \alpha  _{3}^{(2)}  &   \alpha_{4}^{(2)} &   \alpha_{5}^{(2)}&   \alpha_{6}^{(2)} &  \alpha_{7}^{(2)}  &  \alpha_{8}^{(2)} &  \alpha_{9}^{(2)} &\alpha_{10}^{(2)} &\alpha_{11}^{(2)} \\ 
\hline\\
\alpha  _{1}^{(3)} & \alpha  _{2}^{(3)}  &   \alpha_{3}^{(3)}  &   \alpha_{4}^{(3)} &   \alpha_{5}^{(3)} &  \alpha_{6}^{(3)} &  \alpha_{7}^{(3)} &  \alpha_{8}^{(3)} &\alpha_{9}^{(3)}& \alpha_{10}^{(3)} \\
\alpha  _{2}^{(3)} & \alpha  _{3}^{(3)}  &   \alpha_{4}^{(3)}&   \alpha_{5}^{(3)} &   \alpha_{6}^{(3)}  &  \alpha_{7}^{(3)} & \alpha_{8}^{(3)}  &   \alpha_{9}^{(3)}&\alpha_{10}^{(3)} &\alpha_{11}^{(3)} \\ 
\hline \\
\vdots & \vdots & \vdots  & \vdots  & \vdots & \vdots  & \vdots & \vdots  & \vdots & \vdots \\
\hline \\
\alpha  _{1}^{(n)} & \alpha  _{2}^{(n)}  &   \alpha_{3}^{(n)} &   \alpha_{4}^{(n)} &   \alpha_{5}^{(n)}  &  \alpha_{6}^{(n)} &  \alpha_{7}^{(n)}  &  \alpha_{8}^{(n)}& \alpha_{9}^{(n)}& \alpha_{10}^{(n)}  \\
\alpha  _{2}^{(n)} & \alpha  _{3}^{(n)}  &   \alpha_{4}^{(n)}&   \alpha_{5}^{(n)} &   \alpha_{6}^{(n)}  &  \alpha_{7}^{(n)} &  \alpha_{8}^{(n)}  &   \alpha_{9}^{(n)}&\alpha_{10}^{(n)} & \alpha_{11}^{(n)} \\ 
\end{array} \right )  
\end{equation} 
We remark that the results in this  paper are just a generalization of  the results  obtained in  the author's paper [14].
  \section{Some notations concerning the field of Laurent Series $ \mathbb{F}_{2}((T^{-1})) $. }
  \label{sec 2}  
  We denote by $ \mathbb{F}_{2}\big(\big({T^{-1}}\big) \big)
 = \mathbb{K} $ the completion
 of the field $\mathbb{F}_{2}(T), $  the field of  rational fonctions over the
 finite field\; $\mathbb{F}_{2}$,\; for the  infinity  valuation \;
 $ \mathfrak{v}=\mathfrak{v}_{\infty }$ \;defined by \;
 $ \mathfrak{v}\big(\frac{A}{B}\big) = degB -degA $ \;
 for each pair (A,B) of non-zero polynomials.
 Then every element non-zero t in
  $\mathbb{F}_{2}\big(\big({\frac{1}{T}}\big) \big) $
 can be expanded in a unique way in a convergent Laurent series
                              $  t = \sum_{j= -\infty }^{-\mathfrak{v}(t)}t_{j}T^j
                                 \; where\; t_{j}\in \mathbb{F}_{2}. $\\
  We associate to the infinity valuation\; $\mathfrak{v}= \mathfrak{v}_{\infty }$
   the absolute value \; $\vert \cdot \vert_{\infty} $\; defined by \;
  \begin{equation*}
  \vert t \vert_{\infty} =  \vert t \vert = 2^{-\mathfrak{v}(t)}. \\
\end{equation*}
    We denote  E the  Character of the additive locally compact group
$  \mathbb{F}_{2}\big(\big({\frac{1}{T}}\big) \big) $ defined by \\
\begin{equation*}
 E\big( \sum_{j= -\infty }^{-\mathfrak{v}(t)}t_{j}T^j\big)= \begin{cases}
 1 & \text{if      }   t_{-1}= 0, \\
  -1 & \text{if      }   t_{-1}= 1.
    \end{cases}
\end{equation*}
  We denote $\mathbb{P}$ the valuation ideal in $ \mathbb{K},$ also denoted the unit interval of  $\mathbb{K},$ i.e.
  the open ball of radius 1 about 0 or, alternatively, the set of all Laurent series 
   $$ \sum_{i\geq 1}\alpha _{i}T^{-i}\quad (\alpha _{i}\in  \mathbb{F}_{2} ) $$ and, for every rational
    integer j,  we denote by $\mathbb{P}_{j} $
     the  ideal $\left\{t \in \mathbb{K}|\; \mathfrak{v}(t) > j \right\}. $
     The sets\; $ \mathbb{P}_{j}$\; are compact subgroups  of the additive
     locally compact group \; $ \mathbb{K}. $\\
      All $ t \in \mathbb{F}_{2}\Big(\Big(\frac{1}{T}\Big)\Big) $ may be written in a unique way as
$ t = [t] + \left\{t\right\}, $ \;  $  [t] \in \mathbb{F}_{2}[T] ,
 \; \left\{t\right\}\in \mathbb{P}  ( =\mathbb{P}_{0}). $\\
 We denote by dt the Haar measure on \; $ \mathbb{K} $\; chosen so that \\
  $$ \int_{\mathbb{P}}dt = 1. $$\\
  
  $$ Let \quad
  (t_{1},t_{2},\ldots,t_{n} )
 =  \big( \sum_{j=-\infty}^{-\nu(t_{1})}\alpha _{j}^{(1)}T^{j},  \sum_{j=-\infty}^{-\nu(t_{2})}\alpha _{j}^{(2)}T^{j} ,\ldots, \sum_{j=-\infty}^{-\nu(t_{n})}\alpha _{j}^{(n)}T^{j}\big) \in  \mathbb{K}^{n}. $$ 
 We denote $\psi  $  the  Character on  $(\mathbb{K}^n, +) $ defined by \\
 \begin{align*}
  \psi \big( \sum_{j=-\infty}^{-\nu(t_{1})}\alpha _{j}^{(1)}T^{j},  \sum_{j=-\infty}^{-\nu(t_{2})}\alpha _{j}^{(2)}T^{j} ,\ldots, \sum_{j=-\infty}^{-\nu(t_{n})}\alpha _{j}^{(n)}T^{j}\big) & = E \big( \sum_{j=-\infty}^{-\nu(t_{1})}\alpha _{j}^{(1)}T^{j}\big) \cdot E\big( \sum_{j=-\infty}^{-\nu(t_{2})}\alpha _{j}^{(2)}T^{j}\big)\cdots E\big(  \sum_{j=-\infty}^{-\nu(t_{n})}\alpha _{j}^{(n)}T^{j}\big) \\
  & = 
    \begin{cases}
 1 & \text{if      }     \alpha _{-1}^{(1)} +    \alpha _{-1}^{(2)}  + \ldots +   \alpha _{-1}^{(n)}   = 0 \\
  -1 & \text{if      }    \alpha _{-1}^{(1)} +    \alpha _{-1}^{(2)}  + \ldots +   \alpha _{-1}^{(n)}   =1                                                                                                                          
    \end{cases}
  \end{align*}
  \section{Some results concerning  n-times persymmetric matrices over  $ \mathbb{F}_{2}$.}
  \label{sec 3}  
     $$ Set\quad
  (t_{1},t_{2},\ldots,t_{n} )
 =  \big( \sum_{i\geq 1}\alpha _{i}^{(1)}T^{-i}, \sum_{i \geq 1}\alpha  _{i}^{(2)}T^{-i},\sum_{i \geq 1}\alpha _{i}^{(3)}T^{-i},\ldots,\sum_{i \geq 1}\alpha _{i}^{(n)}T^{-i}   \big) \in  \mathbb{P}^{n}. $$

     Denote by $D^{\left[2 \atop{\vdots \atop 2}\right]\times k}(t_{1},t_{2},\ldots,t_{n} ) $
    
    the following $2n \times k $ \;  n-times  persymmetric  matrix  over the finite field  $\mathbb{F}_{2}. $ 
    
  \begin{equation}
  \label{eq 3.1}
   \left (  \begin{array} {cccccccc}
\alpha  _{1}^{(1)} & \alpha  _{2}^{(1)}  &   \alpha_{3}^{(1)} &   \alpha_{4}^{(1)} &   \alpha_{5}^{(1)} &  \alpha_{6}^{(1)}  & \ldots  &  \alpha_{k}^{(1)} \\
\alpha  _{2}^{(1)} & \alpha  _{3}^{(1)}  &   \alpha_{4}^{(1)} &   \alpha_{5}^{(1)} &   \alpha_{6}^{(1)} &  \alpha_{7}^{(1)} & \ldots  &  \alpha_{k+1}^{(1)} \\ 
\hline \\
\alpha  _{1}^{(2)} & \alpha  _{2}^{(2)}  &   \alpha_{3}^{(2)} &   \alpha_{4}^{(2)} &   \alpha_{5}^{(2)} &  \alpha_{6}^{(2)} & \ldots   &  \alpha_{k}^{(2)} \\
\alpha  _{2}^{(2)} & \alpha  _{3}^{(2)}  &   \alpha_{4}^{(2)} &   \alpha_{5}^{(2)}&   \alpha_{6}^{(2)} &  \alpha_{7}^{(2)}  & \ldots  &  \alpha_{k+1}^{(2)} \\ 
\hline\\
\alpha  _{1}^{(3)} & \alpha  _{2}^{(3)}  &   \alpha_{3}^{(3)}  &   \alpha_{4}^{(3)} &   \alpha_{5}^{(3)} &  \alpha_{6}^{(3)} & \ldots  &  \alpha_{k}^{(3)} \\
\alpha  _{2}^{(3)} & \alpha  _{3}^{(3)}  &   \alpha_{4}^{(3)}&   \alpha_{5}^{(3)} &   \alpha_{6}^{(3)}  &  \alpha_{7}^{(3)} & \ldots  &  \alpha_{k+1}^{(3)} \\ 
\hline \\
\vdots & \vdots & \vdots  & \vdots  & \vdots & \vdots  & \vdots & \vdots \\
\hline \\
\alpha  _{1}^{(n)} & \alpha  _{2}^{(n)}  &   \alpha_{3}^{(n)} &   \alpha_{4}^{(n)} &   \alpha_{5}^{(n)}  &  \alpha_{6}^{(n)} & \ldots  &  \alpha_{k}^{(n)} \\
\alpha  _{2}^{(n)} & \alpha  _{3}^{(n)}  &   \alpha_{4}^{(n)}&   \alpha_{5}^{(n)} &   \alpha_{6}^{(n)}  &  \alpha_{7}^{(n)} & \ldots  &  \alpha_{k+1}^{(n)} \\ 
\end{array} \right )  
\end{equation} 
We denote by  $ \Gamma_{i}^{\left[2\atop{\vdots \atop 2}\right]\times k}$  the number of rank i n-times persymmetric matrices over $\mathbb{F}_{2}$ of the above form :  \\

  Let $ \displaystyle  f (t_{1},t_{2},\ldots,t_{n} ) $  be the exponential sum  in $ \mathbb{P}^{n} $ defined by\\
    $(t_{1},t_{2},\ldots,t_{n} ) \displaystyle\in \mathbb{P}^{n}\longrightarrow \\
    \sum_{deg Y\leq k-1}\sum_{deg U_{1}\leq  1}E(t_{1} YU_{1})
  \sum_{deg U_{2} \leq 1}E(t _{2} YU_{2}) \ldots \sum_{deg U_{n} \leq 1} E(t _{n} YU_{n}). $\vspace{0.5 cm}\\
    Then
  $$     f_{k} (t_{1},t_{2},\ldots,t_{n} ) =
  2^{2n+k- rank\big[ D^{\left[2\atop{\vdots \atop 2}\right]\times k}(t_{1},t_{2},\ldots,t_{n} )\big] } $$

    Hence  the number denoted by $ R_{q,n}^{(k)} $ of solutions \\
  
 $(Y_1,U_{1}^{(1)},U_{2}^{(1)}, \ldots,U_{n}^{(1)}, Y_2,U_{1}^{(2)},U_{2}^{(2)}, 
\ldots,U_{n}^{(2)},\ldots  Y_q,U_{1}^{(q)},U_{2}^{(q)}, \ldots,U_{n}^{(q)}   ) \in (\mathbb{F}_{2}[T])^{(n+1)q}$ \vspace{0.5 cm}\\
 of the polynomial equations  \vspace{0.5 cm}
  \[\left\{\begin{array}{c}
 Y_{1}U_{1}^{(1)} + Y_{2}U_{1}^{(2)} + \ldots  + Y_{q}U_{1}^{(q)} = 0  \\
    Y_{1}U_{2}^{(1)} + Y_{2}U_{2}^{(2)} + \ldots  + Y_{q}U_{2}^{(q)} = 0\\
    \vdots \\
   Y_{1}U_{n}^{(1)} + Y_{2}U_{n}^{(2)} + \ldots  + Y_{q}U_{n}^{(q)} = 0 
 \end{array}\right.\]
 
    $ \Leftrightarrow
    \begin{pmatrix}
   U_{1}^{(1)} & U_{1}^{(2)} & \ldots  & U_{1}^{(q)} \\ 
      U_{2}^{(1)} & U_{2}^{(2)}  & \ldots  & U_{2}^{(q)}  \\
\vdots &   \vdots & \vdots &   \vdots   \\
U_{n}^{(1)} & U_{n}^{(2)}   & \ldots  & U_{n}^{(q)} \\
 \end{pmatrix}  \begin{pmatrix}
   Y_{1} \\
   Y_{2}\\
   \vdots \\
   Y_{q} \\
  \end{pmatrix} =   \begin{pmatrix}
  0 \\
  0 \\
  \vdots \\
  0 
  \end{pmatrix} $\\

    satisfying the degree conditions \\
                   $$  degY_i \leq k-1 ,
                   \quad degU_{j}^{(i)} \leq 1, \quad  for \quad 1\leq j\leq n,  \quad 1\leq i \leq q $$ \\
  is equal to the following integral over the unit interval in $ \mathbb{K}^{n} $
    $$ \int_{\mathbb{P}^{n}} f_{k}^{q}(t_{1},t_{2},\ldots,t_{n}) dt_{1}dt _{2}\ldots dt _{n}. $$
  Observing that $ f (t_{1},t_{2},\ldots,t_{n} ) $ is constant on cosets of $ \prod_{j=1}^{n}\mathbb{P}_{k+1} $ in $ \mathbb{P}^{n} $\;
  the above integral is equal to 
  
  \begin{equation}
  \label{eq 3.2}
 2^{q(2n+k) - (k+1)n}\sum_{i = 0}^{\inf(2n,k)}
  \Gamma_{i}^{\left[2\atop{\vdots \atop 2}\right]\times k} 2^{-iq} =  R_{q,n}^{(k)}. 
 \end{equation}
 
 \begin{eqnarray}
 \label{eq 3.3}
\text{ Recall that $ R_{q,n}^{(k)}$ is equal to the number of solutions of the polynomial system} \nonumber \\
    \begin{pmatrix}
   U_{1}^{(1)} & U_{1}^{(2)} & \ldots  & U_{1}^{(q)} \\ 
      U_{2}^{(1)} & U_{2}^{(2)}  & \ldots  & U_{2}^{(q)}  \\
\vdots &   \vdots & \vdots &   \vdots   \\
U_{n}^{(1)} & U_{n}^{(2)}   & \ldots  & U_{n}^{(q)} \\
 \end{pmatrix}  \begin{pmatrix}
   Y_{1} \\
   Y_{2}\\
   \vdots \\
   Y_{q} \\
  \end{pmatrix} =   \begin{pmatrix}
  0 \\
  0 \\
  \vdots \\
  0 
  \end{pmatrix} \\
 \text{ satisfying the degree conditions}\nonumber \\
                     degY_i \leq k-1 ,
                   \quad degU_{j}^{(i)} \leq 1, \quad  for \quad 1\leq j\leq n,   \quad 1\leq i \leq q. \nonumber
 \end{eqnarray}

 From \eqref{eq 3.2} we obtain for q = 1\\
   \begin{align}
  \label{eq 3.4}
 2^{k-(k-1)n}\sum_{i = 0}^{\inf(2n,k)}
 \Gamma_{i}^{\left[2\atop{\vdots \atop 2}\right]\times k} 2^{-i} =  R_{1,n}^{(k)} = 2^{2n}+2^k-1.
  \end{align}

We have obviously \\

   \begin{align}
  \label{eq 3.5}
 \sum_{i = 0}^{k}
 \Gamma_{i}^{\left[2\atop{\vdots \atop 2}\right]\times k}  = 2^{(k+1)n}.  
 \end{align}

From  the fact that the number of rank one persymmetric  matrices over $\mathbb{F}_{2}$ is equal to three  we obtain using
 combinatorial methods  : \\
 
    \begin{equation}
  \label{eq 3.6}
 \Gamma_{1}^{\left[2\atop{\vdots \atop 2}\right]\times k}  = (2^{n}-1)\cdot 3.
  \end{equation}
  For more details see Cherly  [11]

 \subsection{Computation of $ \Gamma_{7}^{\left[2\atop{\vdots \atop 2}\right]\times k} $.}
We recall (see section \ref{sec 3} ) that $ \Gamma_{7}^{\left[2\atop{\vdots \atop 2}\right]\times k}$ denotes the number of rank 7
n-times persymmetric matrices over  $\mathbf{F}_2 $  of the form \eqref{eq 3.1}
We shall need the following Lemma  : \\
  \begin{lem}
\label{lem 3.1}
\begin{equation}
\label{eq 3.7}
   \Gamma_{7}^{\left[2\atop{\vdots \atop 2}\right]\times k}=   \begin{cases}
 0 & \text{if  } n = 0,  \\  
0 & \text{if  } n = 1,   \\
0 & \text{if  } n = 2,\\
0 & \text{if  } n = 3,\\
3720\cdot 2^{3k}-416640\cdot2^{2k}+13332480\cdot2^{k}-121896960  & \text{if   } n=4,\\
115320\cdot[2^{3k}+1148\cdot2^{2k}-2^7\cdot917\cdot2^{k}+311\cdot2^{13}]  & \text{if   }  n = 5, \\
 \end{cases}
   \end{equation} 
   
   \begin{equation}
\label{eq 3.8}
   \Gamma_{7}^{\left[2\atop{\vdots \atop 2}\right]\times k}=   \begin{cases}
 255\cdot 2^{7n}-381\cdot2^{6n}-31122\cdot2^{5n} 
    +105648\cdot 2^{4n}  \\+758880\cdot 2^{3n}-4617984 \cdot 2^{2n}+7913472 \cdot2^{n} -4128768    & \text{if   } k=8,\\
  255\cdot 2^{7n}  +42291\cdot2^{6n}  -219618\cdot 2^{5n}-4053808\cdot 2^{4n} \\ +
 32840160\cdot 2^{3n}-82168576 \cdot 2^{2n}+ 81543168\cdot2^{n} -27983872  & \text{if   } k=9,
   \end{cases}
   \end{equation} 
    \end{lem}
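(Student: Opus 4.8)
\emph{Proof strategy.} Write $\Gamma_{i}$ for $\Gamma_{i}^{\left[2\atop{\vdots \atop 2}\right]\times k}$. The cases $n=0,1,2,3$ are immediate: a $2n\times k$ matrix has rank at most $2n\leq 6$, so no matrix of the form \eqref{eq 3.1} can have rank $7$ and hence $\Gamma_{7}=0$ there. For $n\geq 4$ the plan is to recover $\Gamma_{7}$ purely by linear algebra from relations already established. By \eqref{eq 3.2}, for every integer $q\geq 0$,
\[
\sum_{i=0}^{\inf(2n,k)}\Gamma_{i}\,2^{-iq}=2^{(k+1)n-q(2n+k)}\,R_{q,n}^{(k)},
\]
the cases $q=0$ and $q=1$ being precisely \eqref{eq 3.5} and \eqref{eq 3.4}. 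As $q$ varies, these identities constitute a Vandermonde-type linear system in the unknowns $\Gamma_{0},\dots,\Gamma_{\inf(2n,k)}$.

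Since $\Gamma_{0}=1$, since $\Gamma_{1}$ is given by \eqref{eq 3.6}, and since $\Gamma_{2},\dots,\Gamma_{6}$ are already known (from earlier in the paper and from [14]), the only remaining unknowns are the $\inf(2n,k)-6$ quantities $\Gamma_{7},\dots,\Gamma_{\inf(2n,k)}$, and the instances $q=0,1,\dots,\inf(2n,k)-7$ of the displayed identity determine all of them, $\Gamma_{7}$ in particular. Concretely, for \eqref{eq 3.8} with $k=8$ the only remaining unknowns are $\Gamma_{7}$ and $\Gamma_{8}$, so \eqref{eq 3.5} and \eqref{eq 3.4} already suffice; for $k=9$ one needs in addition the case $q=2$; and for \eqref{eq 3.7} one fixes $n=4$ or $n=5$, so that $\inf(2n,k)$ equals $\min(8,k)$ or $\min(10,k)$ and only the cases $q\leq 1$, respectively $q\leq 3$, are required.

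The only genuinely computational ingredient is therefore the evaluation of $R_{q,n}^{(k)}$ for these small $q$. By \eqref{eq 3.3}, $R_{q,n}^{(k)}$ is the number of pairs consisting of a tuple $(Y_{1},\dots,Y_{q})\in(\mathbb{F}_{2}[T])^{q}$ with $\deg Y_{i}\leq k-1$ and an $n\times q$ matrix $\big(U_{j}^{(i)}\big)$ with every $\deg U_{j}^{(i)}\leq 1$ satisfying $\sum_{i=1}^{q}Y_{i}U_{j}^{(i)}=0$ for $1\leq j\leq n$. The plan is to sum first over the matrix: for a fixed $\big(U_{j}^{(i)}\big)$ the admissible tuples $(Y_{i})$ are the elements with all coordinates of degree $\leq k-1$ of a submodule of $\mathbb{F}_{2}[T]^{q}$, hence form an $\mathbb{F}_{2}$-vector space, whose cardinality is an explicit power of $2$ governed by the rank of $\big(U_{j}^{(i)}\big)$ over $\mathbb{F}_{2}(T)$ and by the degrees of a minimal kernel basis. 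Grouping the matrices into the finitely many (for fixed $q$) classes sharing those invariants, and counting the matrices in each class---a count that is polynomial in $2^{n}$---one obtains $R_{q,n}^{(k)}$ as a polynomial in $2^{k}$ whose coefficients are polynomials in $2^{n}$.

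Substituting these values into the Vandermonde system and inverting yields $\Gamma_{7}$; specialising $n=4$ and $n=5$ gives \eqref{eq 3.7}, and setting $k=8$ and $k=9$ gives \eqref{eq 3.8}. As a final consistency check one verifies that the two families of formulas agree at the four overlap points $(n,k)\in\{(4,8),(4,9),(5,8),(5,9)\}$. The hardest part will be the bookkeeping in the evaluation of $R_{q,n}^{(k)}$---the case analysis on the rank of $\big(U_{j}^{(i)}\big)$ and on the degrees and common factors of the $Y_{i}$---everything after that being routine linear algebra and polynomial arithmetic.
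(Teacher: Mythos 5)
Your overall frame is sound in outline, but it stops short of the actual content of the lemma. The cases $n\leqslant 3$ are indeed trivial, and your Vandermonde observation is correct as far as it goes: for $k=8$ (and for $n=4$, any $k\geqslant 8$) one has $\inf(2n,k)\leqslant 8$, so once $\Gamma_{0},\dots,\Gamma_{6}$ are taken as known from \eqref{eq 3.14} (i.e.\ from [14]), the two moment identities \eqref{eq 3.5} and \eqref{eq 3.4} do determine $\Gamma_{7}$ and $\Gamma_{8}$, and this would give a genuinely self-contained derivation of the $k=8$ line of \eqref{eq 3.8} and the $n=4$ line of \eqref{eq 3.7}. The gap is everywhere else: for $k=9$ with $2n>9$, and for $n=5$ with $k\geqslant 10$, your system has three, respectively four, unknowns $\Gamma_{7},\dots,\Gamma_{\inf(2n,k)}$, so you need $R_{2,n}^{(k)}$ and $R_{3,n}^{(k)}$, and these you never compute. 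The paragraph about summing first over the matrix $\big(U_{j}^{(i)}\big)$, stratifying by rank and minimal kernel indices, and counting each stratum as a polynomial in $2^{n}$ is a correct description of what such a computation would look like, but it is precisely this case analysis (which in Cherly's companion papers [12], [13], [14] occupies dozens to hundreds of pages) that produces the explicit coefficients $3720$, $-416640$, $115320$, $42291$, $-4053808$, etc. Without carrying it out, none of the numerical formulas in \eqref{eq 3.7}--\eqref{eq 3.8} is actually established; your proposal proves at most the qualitative statement that $\Gamma_{7}$ is determined by the lower-rank counts and finitely many moments.

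For comparison, the paper does not derive the lemma at all: its proof is a one-line citation to [12], [13] and [14], where these counts (quadruple and quintuple persymmetric matrices, and the $2n\times 9$ case) were computed. So your route is different in kind --- it would replace external citation by an internal moment computation --- and in the $k=8$ and $n=4$ subcases it genuinely works with only the ingredients already present in Sections 2--3 plus the $\Gamma_{2},\dots,\Gamma_{6}$ formulas (which are themselves imported from [14], so note you are not fully independent of the earlier papers either). To make the proposal a proof of the full lemma you would have to either (i) actually evaluate $R_{2,n}^{(k)}$ and $R_{3,n}^{(k)}$ by the kernel-index classification you sketch, or (ii) follow the paper and import the missing cases from [13] and [14]; also spell out the value of $\inf(2n,k)$ in each subcase (e.g.\ $n=4$, $k=9$ needs only $q\leqslant 1$, while $n\geqslant 5$, $k=9$ needs $q=2$), since your current statement of which $q$ are needed blurs this dependence on $n$.
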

\begin{proof}
Lemma \ref{lem 3.1} follows from  Cherly[12,13 and 14].
\end{proof}

  \begin{lem}
  \label{lem 3.2}
We postulate that :\\
\begin{align}
\label{eq 3.9}
 \displaystyle   \Gamma_{7}^{\left[2\atop{\vdots \atop 2}\right]\times k} =   255\cdot 2^{7n}  +a(k) \cdot 2^{6n}  +b(k) \cdot 2^{5n}+ c(k) \cdot 2^{4n}\\
  + d(k)\cdot 2^{3n}+e(k) \cdot 2^{2n}  +f(k)\cdot 2^{n} +g(k)\nonumber \\ 
  =  255\cdot 2^{7n}  + [\frac{2667}{16}\cdot 2^k -43053]  \cdot 2^{6n} \nonumber\\
   +[ \frac{465}{32}\cdot2^{2k}-\frac{190341}{16}\cdot2^k+2062014] \cdot 2^{5n} \nonumber \\
  +  [ \frac{31}{168}\cdot 2^{3k}-\frac{45229}{96}\cdot2^{2k}+\frac{6262403}{24}\cdot2^k-\frac{817168432}{21}]\cdot 2^{4n}\nonumber  \\
  +[ -\frac{465}{168}\cdot2^{3k}+\frac{231105}{48}\cdot2^{2k}-\frac{4605205}{2}\cdot2^k+\frac{2247886880}{7}] \cdot 2^{3n} \nonumber\\
  + [\frac{155}{12}\cdot2^{3k}-\frac{233585}{12}\cdot2^{2k}+\frac{26162884}{3}\cdot2^k-\frac{3534612736}{3} ] \cdot 2^{2n}\nonumber \\
   + [-\frac{155}{7}\cdot2^{3k}+31310\cdot2^{2k}-13600384\cdot2^k+\frac{1466315\cdot2^{13}}{7} +11373\cdot2^{13}]\cdot 2^{n}\nonumber \\
   + \frac{248}{21}\cdot2^{3k}-\frac{48608}{3}\cdot2^{2k}+\frac{20798464}{3}\cdot2^k-\frac{293263\cdot2^{16}}{21}\nonumber \\
=  \frac{31}{168}\cdot [2^{4n}-15\cdot 2^{3n}+70 \cdot 2^{2n}-120\cdot 2^{n}+64] \cdot2^{3k}\nonumber\\
+  \frac{1}{96}\cdot [ 1395\cdot2^{5n}-45229\cdot2^{4n}+462210\cdot 2^{3n}-1868680 \cdot 2^{2n}+3005760\cdot 2^{n}-1555456] \cdot2^{2k}\nonumber\\
+  \frac{1}{48}\cdot [ 8001\cdot2^{6n}-571023\cdot2^{5n}+12524806\cdot2^{4n}-110524920\cdot 2^{3n}+418606144\cdot 2^{2n}\nonumber\\
-652818432\cdot 2^{n}+332775424] \cdot2^{k}\nonumber\\
+\frac{1}{21}\cdot [5355\cdot2^{7n}- 904113\cdot2^{6n}+43302294\cdot2^{5n}-817168432\cdot2^{4n}+6743660640\cdot 2^{3n}\nonumber\\
-96649567\cdot2^{8}\cdot 2^{2n}+4637778\cdot2^{13}\cdot 2^{n}-293263\cdot2^{16}] \nonumber
  \end{align}
\end{lem}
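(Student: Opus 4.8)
The plan is to split Lemma~\ref{lem 3.2} into a structural claim and an interpolation. Formula \eqref{eq 3.9} asserts that $\Gamma_{7}^{\left[2\atop{\vdots \atop 2}\right]\times k}$, viewed as a function of the two quantities $x=2^{n}$ and $y=2^{k}$, is a polynomial supported on the monomials $x^{j}y^{i}$ with $i+j\le 7$ and $i\le 3$ (a $26$-dimensional space) with the listed rational coordinates. First I would establish that monomial envelope; then I would determine the $26$ coordinates by matching against the special values in Lemma~\ref{lem 3.1}; finally I would use the large surplus of values in Lemma~\ref{lem 3.1} as internal consistency checks.

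For the envelope I would work from the generating identity \eqref{eq 3.2}. For fixed $n,k$ its left side is a Vandermonde-type combination of $\Gamma_{0},\dots,\Gamma_{\inf(2n,k)}$ in the variable $2^{-q}$, so the $\Gamma_{i}$ are recovered from the values $R_{q,n}^{(k)}$ by Lagrange inversion. By \eqref{eq 3.3}, $R_{q,n}^{(k)}$ counts solutions of a homogeneous $\mathbb{F}_{2}[T]$-linear system under the degree caps $\deg Y_{i}\le k-1$ and $\deg U_{j}^{(i)}\le 1$; decomposing this count according to the rank profile of the matrix of the $U_{j}^{(i)}$'s (each entry lying in the $2$-dimensional space of polynomials of degree $\le 1$) and counting the admissible $Y$'s in the corresponding kernel exhibits $R_{q,n}^{(k)}$ as an explicit polynomial in $2^{n},2^{k},2^{q}$ whose bidegree in $(2^{n},2^{k})$ is controlled by those caps; isolating the coefficient of $2^{-7q}$ then gives exactly the support $\{i+j\le 7,\ i\le 3\}$ for $\Gamma_{7}$ --- the bound $i\le 3=\lfloor 7/2\rfloor$ coming from the two-row structure of each persymmetric block, and the diagonal bound $i+j\le 7$ (in particular the constancy in $k$ of the $2^{7n}$-coefficient) from the rank being capped at $7$. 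This is the two-variable analogue of, and is proved by the same bookkeeping as, the formulas of Cherly [12,13,14] quoted in Lemma~\ref{lem 3.1}, and in a pinch it may simply be imported from there.

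Granting the envelope, there would remain $26$ unknown coefficients. The four vanishings $\Gamma_{7}=0$ at $n=0,1,2,3$ say that $(x-1)(x-2)(x-4)(x-8)$ divides $\Gamma_{7}(x,y)$, so I would write $\Gamma_{7}=(x-1)(x-2)(x-4)(x-8)\,Q(x,y)$ with $Q$ of joint degree $\le 3$ and $y$-degree $\le 3$ --- only ten coefficients, the $x^{3}$-one being forced to $255$ by the common leading term of the polynomials in Lemma~\ref{lem 3.1}. The two cubics in $2^{k}$ (cases $n=4$ and $n=5$) then determine $Q(16,y)$ and $Q(32,y)$, and the two degree-$7$ polynomials in $2^{n}$ (cases $k=8$ and $k=9$) determine $Q(x,256)$ and $Q(x,512)$; these over-determine $Q$, so I would solve the resulting linear system and verify the remaining relations. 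Re-expanding $(x-1)(x-2)(x-4)(x-8)\,Q$ and regrouping would produce the three equivalent displays in \eqref{eq 3.9} (by powers of $2^{n}$, by powers of $2^{k}$, and the intermediate mixed form). The only genuinely delicate point is the envelope of the first step --- a crude box bound will not do, one needs the sharp diagonal bound $i+j\le 7$ together with $i\le 3$; everything afterwards is finite-dimensional linear algebra over $\mathbb{Q}$ plus bulky but entirely mechanical arithmetic, which is presumably why the lemma is stated as a postulate to be checked against Lemma~\ref{lem 3.1}.
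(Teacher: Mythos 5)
Your interpolation core coincides with the paper's own argument: the paper also writes $\Gamma_{7}$ as $255\,Y^{7}+a(k)Y^{6}+\dots+g(k)$ with $Y=2^{n}$, uses the vanishing at $n=0,1,2,3$ to factor out $(Y-1)(Y-2)(Y-4)(Y-8)$, leaving a cubic cofactor $255\,Y^{3}+\alpha(k)Y^{2}+\beta(k)Y+\gamma(k)$, and then pins down $\alpha,\beta,\gamma$ from exactly the data of Lemma~\ref{lem 3.1}: the $2^{6n}$-coefficients at $k=8,9$ from \eqref{eq 3.8} (assuming $a(k)=a\cdot2^{k}+b$) give $\alpha(k)$, and the $n=4$ and $n=5$ cubics from \eqref{eq 3.7} give two further linear relations, after which \eqref{eq 3.10}--\eqref{eq 3.13} are mechanical. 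Your use of all four data sets as an over-determined system is a mild variant of the same finite linear algebra, and your observation that the surplus serves as a consistency check is in the spirit of the later verifications against \eqref{eq 3.15}.

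Where you genuinely depart from the paper is the first step, and that is also where your proposal has a gap. The paper never proves the structural ansatz: the lemma is explicitly a \emph{postulate}, and the proof ``assumes'' both the degree-$7$ polynomial shape in $2^{n}$ and the linear shape of $a(k)$ in $2^{k}$, citing the patterns of [12,13,14]. Your plan to derive the bivariate envelope $\{x^{j}y^{i}: i+j\le 7,\ i\le 3\}$ from \eqref{eq 3.2}--\eqref{eq 3.3} by a rank-profile decomposition of the matrix of the $U_{j}^{(i)}$'s does not work as sketched: the entries are polynomials in $\mathbb{F}_{2}[T]$, and under the degree caps $\deg Y_{i}\le k-1$, $\deg U_{j}^{(i)}\le 1$ the number of admissible kernel vectors is \emph{not} a function of the rank alone --- it depends on finer degree data (in effect the invariant factors), which is precisely the hard computation occupying the cited papers; so the claim that $R_{q,n}^{(k)}$ is an explicit polynomial in $2^{n},2^{k},2^{q}$ with the stated bidegree control is unsupported. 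Nor can the envelope ``simply be imported'' from [12,13,14], since those references fix $n=4$, $n=5$ or $k=9$ and hence cannot give a joint bound in $(n,k)$. So your write-up would either have to carry out that missing structural argument in full, or retreat to the paper's position and present the functional form as a hypothesis verified against (and interpolated from) Lemma~\ref{lem 3.1}; as it stands, the delicate point you yourself flag is exactly the point that is not proved.
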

 \begin{proof}
 \begin{align*}
   \text{ From the expression of $ \Gamma_{7}^{\left[2\atop{\vdots \atop 2}\right]\times k} $  in  \eqref{eq 3.8} for k=8,9  we assume that }\\
 \Gamma_{7}^{\left[2\atop{\vdots \atop 2}\right]\times k} \quad \text{can be written in the form} :\\
   \displaystyle  255\cdot 2^{7n}  +a(k) \cdot 2^{6n}  +b(k) \cdot 2^{5n}+ c(k) \cdot 2^{4n}\\
  + d(k)\cdot 2^{3n}+e(k) \cdot 2^{2n}  +f(k)\cdot 2^{n} +g(k) \\ 
  \text{Set} \;  Y=2^n, \; \text{then}\; \Gamma_{7}^{\left[2\atop{\vdots \atop 2}\right]\times k}\\ =  255 \cdot Y^7 +a(k) \cdot Y^6  +b(k) \cdot Y^5  +c(k) \cdot Y^4+ d(k) \cdot Y^3 + e(k) \cdot Y^2+f(k) \cdot Y +g(k) \\
  \Gamma_{7}^{\left[2\atop{\vdots \atop 2}\right]\times k} = 0 \quad  \text{ for}\; n\in\{0,1,2,3\}\\
\text{Then} \; \Gamma_{7}^{\left[2\atop{\vdots \atop 2}\right]\times k} = (Y-1)(Y-2)(Y-4)(Y-8)[255 \cdot Y^3+\alpha(k)\cdot Y^2+\beta(k) \cdot Y+\gamma(k)] \\
 = [2^{4n}-15\cdot 2^{3n}+70\cdot 2^{2n}-120\cdot 2^n+64] \cdot [ 255 \cdot 2^{3n}+\alpha(k) \cdot 2^{2n}+\beta(k) \cdot 2^{n}+\gamma(k)] \\
  = 255\cdot 2^{7n}  + ( \alpha(k)-3825) \cdot 2^{6n}  +( \beta(k)-15\cdot \alpha(k)+17850) \cdot 2^{5n}\\
 +( \gamma(k)-15\cdot \beta(k)+70\cdot \alpha(k)-30600 ) \cdot 2^{4n} \\
 + ( -15\cdot \gamma(k)+70\cdot \beta(k)-120 \cdot \alpha(k)+16320)\cdot 2^{3n}\\
 +  (70\cdot \gamma(k)-120\cdot \beta(k)  +64\cdot \alpha(k) )\cdot 2^{2n} \\
  + (-120 \cdot \gamma(k) +64\cdot \beta(k)) \cdot 2^n + 64\cdot  \gamma(k)   \\ 
\end{align*}
We then deduce : \\

 \begin{equation}
 \label{eq 3.10}
   \begin{cases} 
 \displaystyle a(k) = \alpha(k)-3825 \\
  \displaystyle  b(k) = \beta(k)-15\cdot \alpha(k)+17850 \\
   \displaystyle c(k) = \gamma(k)-15\cdot \beta(k)+70\cdot \alpha(k)-30600  \\
    \displaystyle d(k) = -15\cdot \gamma(k)+70\cdot \beta(k)-120 \cdot \alpha(k)+16320\\
 \displaystyle e(k) = 70\cdot \gamma(k)-120\cdot \beta(k) +64\cdot \alpha(k)  \\
    \displaystyle f(k) = -120 \cdot \gamma(k) +64\cdot \beta(k) \\ 
  \displaystyle g(k) = 64\cdot  \gamma(k)  
\end{cases}
    \end{equation}
To compute the expression of  $ \Gamma_{7}^{\left[2\atop{\vdots \atop 2}\right]\times k} $ we need only to compute $\alpha(k),\beta(k)$ and $\gamma(k)$.\\
\textbf{Computation of $\alpha(k)$}\\
From the expressions of $\Gamma_{i}^{\left[2\atop{\vdots \atop 2}\right]\times k} $ for $2 \leqslant i \leqslant 6$ in $ \mathbf{Lemma \;3.3} \; [12]$\\
  we assume that a(k) can be written in the form $a\cdot2^{k}+b$.\\
  We obtain from \eqref{eq 3.8} 
 \begin{equation*}
   \begin{cases} 
    a(k) = a\cdot 2^k+b\\
     a(8) = a\cdot 256+b= -381\\
      a(9) = a\cdot 512+b= 42291\\
      a=\frac{2667}{16}\\
      b= -43053\\
     (1) \quad  \alpha(k) = a\cdot 2^k+b+3825= \frac{2667}{16}\cdot 2^k -39228
   \end{cases}
    \end{equation*}

\textbf{The case n=4.}  \\ 

$\Gamma_{7}^{\left[2\atop{\vdots \atop 2}\right]\times k} = (2^4-1)(2^4-2)(2^4-4)(2^4-8)[255 \cdot 2^{12}+\alpha(k)\cdot 2^8+\beta(k) \cdot 2^4+\gamma(k)] $\\
$= 20160\cdot [255 \cdot 2^{12}+\alpha(k)\cdot 2^8+\beta(k) \cdot 2^4+\gamma(k)] =3720\cdot 2^{3k}-416640\cdot2^{2k}+13332480\cdot2^{k}-121896960 $\\
$\Rightarrow 255 \cdot 2^{12}+\alpha(k)\cdot 2^8+\beta(k) \cdot 2^4+\gamma(k) = \frac{1}{20160}\cdot[ 3720\cdot 2^{3k}-416640\cdot 2^{2k}+13332480 \cdot2^{k}-121896960  ]$\\
$    \Rightarrow     (2) \quad       256\cdot \alpha(k)+16\cdot \beta(k)+\gamma (k)=\frac{1}{20160}\cdot[ 3720\cdot 2^{3k}-416640\cdot 2^{2k}+13332480 \cdot2^{k}-121896960  ]  -255 \cdot 2^{12} $\\
$    \Rightarrow     (2) \quad       256\cdot \alpha(k)+16\cdot \beta(k)+\gamma (k)=\frac{31}{168}\cdot[  2^{3k}-112\cdot 2^{2k}+3584 \cdot2^{k}-32768  ]  -255 \cdot 2^{12} $\\

\textbf{The case n=5.} \\

$\Gamma_{7}^{\left[2\atop{\vdots \atop 2}\right]\times k} = (2^5-1)(2^5-2)(2^5-4)(2^5-8)[255 \cdot 2^{15}+\alpha(k)\cdot 2^{10}+\beta(k) \cdot 2^5+\gamma(k)] $\\
$= 624960 \cdot [255 \cdot 2^{15}+\alpha(k)\cdot 2^{10}+\beta(k) \cdot 2^5+\gamma(k)]   = 115320\cdot[2^{3k}+1148\cdot2^{2k}-2^7\cdot917\cdot2^{k}+311\cdot2^{13}] $ \\
$\Rightarrow 255 \cdot 2^{15}+\alpha(k)\cdot 2^{10}+\beta(k) \cdot 2^5+\gamma(k) = \frac{1}{624960}\cdot[ 115320\cdot(2^{3k}+1148\cdot2^{2k}-2^7\cdot917\cdot2^{k}+311\cdot2^{13}) ] $\\
$    \Rightarrow     (3) \quad       1024\cdot \alpha(k)+32\cdot \beta(k)+\gamma (k)=\frac{1}{624960}\cdot[ 115320\cdot(2^{3k}+1148\cdot2^{2k}-2^7\cdot917\cdot2^{k}+311\cdot2^{13})]    -255 \cdot 2^{15} $\\
$    \Rightarrow     (3) \quad       1024\cdot \alpha(k)+32\cdot \beta(k)+\gamma (k)=\frac{31}{168}\cdot[ 2^{3k}+1148\cdot2^{2k}-117376\cdot2^{k}+2547712]    -255 \cdot 2^{15} $\\
We then obtain :\\
 \begin{equation}
 \label{eq 3.11}
   \begin{cases} 
  (1) \quad  \alpha(k) =  \frac{2667}{16}\cdot 2^k -39228\\
(2) \quad    256\cdot \alpha(k)+16\cdot \beta(k)+\gamma (k)\\
=\frac{31}{168}\cdot[  2^{3k}-112\cdot 2^{2k}+3584 \cdot2^{k}-32768  ]  -255 \cdot 2^{12} \\
 (3)\quad     1024\cdot \alpha(k)+32\cdot \beta(k)+\gamma (k)\\
 =\frac{31}{168}\cdot[ 2^{3k}+1148\cdot2^{2k}-117376\cdot2^{k}+2547712]    -255 \cdot 2^{15} 
  \end{cases}
    \end{equation} 

From \eqref{eq 3.11} we deduce : \\
 \begin{equation}
 \label{eq 3.12}
   \begin{cases} 
     \alpha(k) =  \frac{2667}{16}\cdot 2^k -39228\\
     \beta(k) =  \frac{465}{32}\cdot2^{2k} -9396\cdot 2^k +1455744\\
       \gamma (k)= \frac{31}{168}\cdot 2^{3k}-\frac{1519}{6}\cdot2^{2k}+\frac{324976}{3}\cdot 2^k-\frac{300301312}{21}
   \end{cases}
    \end{equation} 

Combining \eqref{eq 3.10} and \eqref{eq 3.12} we get :Ê\\

 \begin{equation}
 \label{eq 3.13}
   \begin{cases} 
 \displaystyle a(k) = \alpha(k)-3825=   \frac{2667}{16}\cdot 2^k -43053 \\
  \displaystyle  b(k) = \beta(k)-15\cdot \alpha(k)+17850= \\
   \frac{465}{32}\cdot2^{2k} -9396\cdot 2^k +1455744-15\cdot [\frac{2667}{16}\cdot 2^k -39228]+17850\\
   =  \frac{465}{32}\cdot2^{2k}-\frac{190341}{16}\cdot2^k+2062014\\
   \displaystyle c(k) = \gamma(k)-15\cdot \beta(k)+70\cdot \alpha(k)-30600 \\
   = \frac{31}{168}\cdot 2^{3k}-\frac{1519}{6}\cdot2^{2k}+\frac{324976}{3}\cdot 2^k-\frac{300301312}{21}\\
   -15\cdot [\frac{465}{32}\cdot2^{2k} -9396\cdot 2^k +1455744]+70\cdot [\frac{2667}{16}\cdot 2^k -39228]-30600\\
   =  \frac{31}{168}\cdot 2^{3k}-\frac{45229}{96}\cdot2^{2k}+\frac{6262403}{24}\cdot2^k-\frac{817168432}{21}\\
    \displaystyle d(k) = -15\cdot \gamma(k)+70\cdot \beta(k)-120 \cdot \alpha(k)+16320 \\
   =  -15\cdot[ \frac{31}{168}\cdot 2^{3k}-\frac{1519}{6}\cdot2^{2k}+\frac{324976}{3}\cdot 2^k-\frac{300301312}{21}]\\
+70\cdot[ \frac{465}{32}\cdot2^{2k} -9396\cdot 2^k +1455744] -120\cdot  [\frac{2667}{16}\cdot 2^k -39228 ] +16320\\
= -\frac{465}{168}\cdot2^{3k}+\frac{231105}{48}\cdot2^{2k}-\frac{4605205}{2}\cdot2^k+\frac{2247886880}{7}\\
 \displaystyle e(k) = 70\cdot \gamma(k)-120\cdot \beta(k) +64\cdot \alpha(k)  \\
 = 70\cdot [\frac{31}{168}\cdot 2^{3k}-\frac{1519}{6}\cdot2^{2k}+\frac{324976}{3}\cdot 2^k-\frac{300301312}{21}]\\
 -120\cdot [\frac{465}{32}\cdot2^{2k} -9396\cdot 2^k +1455744] +64\cdot [ \frac{2667}{16}\cdot 2^k -39228] \\
  = \frac{155}{12}\cdot2^{3k}-\frac{233585}{12}\cdot2^{2k}+\frac{26162884}{3}\cdot2^k-\frac{3534612736}{3} \\
   \displaystyle f(k) = -120 \cdot \gamma(k) +64\cdot \beta(k) \\
  =  -120\cdot[\frac{31}{168}\cdot 2^{3k}-\frac{1519}{6}\cdot2^{2k}+\frac{324976}{3}\cdot 2^k-\frac{300301312}{21}] \\
  +64\cdot [\frac{465}{32}\cdot2^{2k} -9396\cdot 2^k +1455744] \\
   = -\frac{155}{7}\cdot2^{3k}+31310\cdot2^{2k}-13600384\cdot2^k+\frac{12012052480}{7} +93167616\\
   =  -\frac{155}{7}\cdot2^{3k}+31310\cdot2^{2k}-13600384\cdot2^k+\frac{1466315\cdot2^{13}}{7} +11373\cdot2^{13}\\
   \displaystyle g(k) = 64\cdot  \gamma(k) \\
  =64\cdot[\frac{31}{168}\cdot 2^{3k}-\frac{1519}{6}\cdot2^{2k}+\frac{324976}{3}\cdot 2^k-\frac{300301312}{21}]\\
  =\frac{248}{21}\cdot2^{3k}-\frac{48608}{3}\cdot2^{2k}+\frac{20798464}{3}\cdot2^k-\frac{293263\cdot2^{16}}{21}
  \end{cases}
    \end{equation}
\end{proof}

  \subsection{Computation of $ \Gamma_{i}^{\left[2\atop{\vdots \atop 2}\right]\times 10} for \; 0\leqslant i\leqslant \inf(2n,10)$.}
 We shall need the following Lemma : \\
  \begin{lem}
\label{lem 3.3}
 \begin{equation}
  \label{eq 3.14}
 \begin{cases} 
 \displaystyle  \Gamma_{0}^{\left[2\atop{\vdots \atop 2}\right]\times k}  = 1 \quad \text{if} \quad  k\geqslant 1 \\
\displaystyle  \Gamma_{1}^{\left[2\atop{\vdots \atop 2}\right]\times k}  = (2^{n}-1)\cdot 3 \quad \text{if} \quad  k\geqslant 2 \\
\displaystyle \Gamma_{2}^{\left[2\atop{\vdots \atop 2}\right]\times k} = 7\cdot2^{2n}+(2^{k+1}-25) \cdot 2^{n}-2^{k+1}+18 \quad \text{for} \quad k\geqslant 3\\
\displaystyle  \Gamma_{3}^{\left[2\atop{\vdots \atop 2}\right]\times k} = 15\cdot2^{3n} + (7\cdot2^k-133)\cdot2^{2n}+ (294-21\cdot 2^k) \cdot 2^{n}   -176+14\cdot2^k \quad \text{for} \quad k\geqslant 4\\
\displaystyle  \Gamma_{4}^{\left[2\atop{\vdots \atop 2}\right]\times k} = 31\cdot2^{4n} + \frac{35\cdot2^{k}-1210}{2}\cdot2^{3n}
+ \frac{2^{2k+2}-783\cdot2^{k}+19028}{6}\cdot 2^{2n}\\
\displaystyle +(-2^{2k+1}+269\cdot2^{k}-5744)\cdot2^n 
 +\frac{2^{2k+2}-117\cdot2^{k+2}+9440}{3}  \quad \text{for} \quad k\geqslant 5\\
  \displaystyle  \Gamma_{5}^{\left[2\atop{\vdots \atop 2}\right]\times k} = 63\cdot2^{5n} + (\frac{155}{4}\cdot2^{k}-2573)\cdot2^{4n}
+ (\frac{5}{2}\cdot2^{2k}-\frac{2565}{4}\cdot2^{k}+29150)\cdot2^{3n}\\
\displaystyle +\frac{1}{2}\cdot(-35\cdot2^{2k}+6265\cdot2^{k}-247520)\cdot2^{2n} 
\displaystyle +(35\cdot2^{2k}-5490\cdot2^{k}+203872)\cdot2^{n}\\
-20\cdot2^{2k}+2960\cdot2^{k}-106752  \quad \text{for} \quad k\geqslant 6\\
 \displaystyle  \Gamma_{6}^{\left[2\atop{\vdots \atop 2}\right]\times k} =   127 \cdot 2^{6n} +[ 651\cdot 2^{k-3}-10605 ] \cdot 2^{5n}+ [\frac{155}{3}\cdot 2^{2k-3} -22661\cdot 2^{k-3}  +\frac{748154}{3}] \cdot 2^{4n} \\
  +  \frac{1}{168}\cdot [2^{3k+3}-16723\cdot2^{2k}+5026378 \cdot 2^{k}-382091648 ]   \cdot 2^{3n} \\
   +[ - \frac{1}{3} \cdot 2^{3k} + \frac{5649}{12}\cdot 2^{2k} -\frac{368711}{3}\cdot 2^{k} +8753120 ] \cdot 2^{2n} \\
   +[\frac{2}{3} \cdot 2^{3k} - \frac{2437}{3}\cdot 2^{2k} + \frac{597736}{3}\cdot 2^{k} -\frac{41276672}{3}] \cdot 2^{n}\\
    -8\cdot [ \frac{1}{21}\cdot 2^{3k}-\frac{163}{3} \cdot 2^{2k}+\frac{38816}{3} \cdot 2^k-\frac{18483200}{21} ]    \quad \text{for} \quad k\geqslant 7\\
      \displaystyle  \Gamma_{7}^{\left[2\atop{\vdots \atop 2}\right]\times k} =   255\cdot 2^{7n}  +a(k) \cdot 2^{6n}  +b(k) \cdot 2^{5n}+ c(k) \cdot 2^{4n} + d(k)\cdot 2^{3n}+e(k) \cdot 2^{2n}  +f(k)\cdot 2^{n} +g(k) \\ 
  =  255\cdot 2^{7n}  + [\frac{2667}{16}\cdot 2^k -43053]  \cdot 2^{6n}  +[ \frac{465}{32}\cdot2^{2k}-\frac{190341}{16}\cdot2^k+2062014] \cdot 2^{5n}\\
  +  [ \frac{31}{168}\cdot 2^{3k}-\frac{45229}{96}\cdot2^{2k}+\frac{6262403}{24}\cdot2^k-\frac{817168432}{21}]\cdot 2^{4n} \\
  +[ -\frac{465}{168}\cdot2^{3k}+\frac{231105}{48}\cdot2^{2k}-\frac{4605205}{2}\cdot2^k+\frac{2247886880}{7}] \cdot 2^{3n}\\
  + [\frac{155}{12}\cdot2^{3k}-\frac{233585}{12}\cdot2^{2k}+\frac{26162884}{3}\cdot2^k-\frac{3534612736}{3} ] \cdot 2^{2n} \\
   + [-\frac{155}{7}\cdot2^{3k}+31310\cdot2^{2k}-13600384\cdot2^k+\frac{1466315\cdot2^{13}}{7} +11373\cdot2^{13}]\cdot 2^{n} \\
   + \frac{248}{21}\cdot2^{3k}-\frac{48608}{3}\cdot2^{2k}+\frac{20798464}{3}\cdot2^k-\frac{293263\cdot2^{16}}{21} \quad \text{for} \quad k\geqslant 8
   \end{cases}
 \end{equation}
 \begin{equation}
 \label{eq 3.15}
  \begin{cases}  
\displaystyle  \sum_{i = 0}^{\inf (2n,k)} \Gamma_{i}^{\left[2\atop{\vdots \atop 2}\right]\times k}  = 2^{(k+1)n}, \\ 
  \displaystyle  \sum_{i = 0}^{\inf (2n,k)} \Gamma_{i}^{\left[2\atop{\vdots \atop 2}\right]\times k} 2^{-i}  = 2^{n+k(n-1)}+2^{(k-1)n}-2^{(k-1)n-k},\\
  \displaystyle \sum_{i = 0}^{\inf (2n,k)} \Gamma_{i}^{\left[2\atop{\vdots \atop 2}\right]\times k} 2^{-2i}  =
   2^{n+k(n-2)}+2^{-n+k(n-2)}\cdot[3\cdot2^k-3] +2^{-2n+k(n-2)}\cdot[6\cdot2^{k-1}-6] \\
   +2^{-3n+kn}-6\cdot2^{n(k-3)-k}+8\cdot2^{-3n+k(n-2)}.
\end{cases}
    \end{equation}
  \end{lem}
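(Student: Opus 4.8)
The plan is to prove Lemma \ref{lem 3.3} as a compilation of previously established formulas together with three auxiliary identities that record combinatorial constraints on the rank distribution. Concretely, the formulas for $\Gamma_{i}^{\left[2\atop{\vdots \atop 2}\right]\times k}$ with $0\leqslant i\leqslant 7$ are not to be rederived here: the cases $i=0,1$ are immediate (a rank-$0$ matrix is the zero matrix, and \eqref{eq 3.6} handles $i=1$), the cases $2\leqslant i\leqslant 6$ are quoted verbatim from Cherly [12], and the case $i=7$ is exactly the statement of Lemma \ref{lem 3.2}, already proved above. So the first part of the proof is simply: "collect \eqref{eq 3.14} from [11,12] and Lemma \ref{lem 3.2}." The substantive content is the three summation identities \eqref{eq 3.15}.

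For the first identity $\sum_{i}\Gamma_{i}^{\left[2\atop{\vdots \atop 2}\right]\times k}=2^{(k+1)n}$ I would argue that the left-hand side counts \emph{all} matrices of the form \eqref{eq 3.1}: each of the $n$ horizontal blocks is a $2\times k$ persymmetric matrix determined freely by the $k+1$ entries $\alpha_1^{(j)},\dots,\alpha_{k+1}^{(j)}\in\mathbb{F}_2$, so there are $2^{(k+1)n}$ matrices in total, partitioned by rank. This is \eqref{eq 3.5} restated with the $\inf(2n,k)$ cutoff made explicit (ranks above $\inf(2n,k)$ are impossible). For the second and third identities I would invoke \eqref{eq 3.2}, which expresses $R_{q,n}^{(k)}=2^{q(2n+k)-(k+1)n}\sum_{i}\Gamma_{i}^{\left[2\atop{\vdots \atop 2}\right]\times k}2^{-iq}$, specialized to $q=1$ and $q=2$. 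For $q=1$, \eqref{eq 3.4} already gives $R_{1,n}^{(k)}=2^{2n}+2^k-1$; dividing by $2^{2n+k-(k+1)n}=2^{k-(k-1)n}$ yields $\sum_i\Gamma_i 2^{-i}=2^{n+k(n-1)}+2^{(k-1)n}-2^{(k-1)n-k}$, which is the second line. For the third line I need $R_{2,n}^{(k)}$, the number of solutions $(Y_1,U_j^{(1)},Y_2,U_j^{(2)})$ with $\deg Y_i\leqslant k-1$, $\deg U_j^{(i)}\leqslant 1$, of the single matrix equation in \eqref{eq 3.3} with $q=2$; this system reads $Y_1 U_j^{(1)}+Y_2 U_j^{(2)}=0$ for $1\leqslant j\leqslant n$, i.e. $Y_1 U_j^{(1)}=Y_2 U_j^{(2)}$ in $\mathbb{F}_2[T]$.

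The main work, and the main obstacle, is therefore the direct evaluation of $R_{2,n}^{(k)}$. I would count solutions of $Y_1 U_j^{(1)}=Y_2 U_j^{(2)}$ ($1\leqslant j\leqslant n$) by casework on whether $Y_1,Y_2$ vanish and on $d=\gcd(Y_1,Y_2)$. If $Y_1=Y_2=0$ the $U_j^{(i)}$ are arbitrary, contributing $4^{2n}$. If exactly one of $Y_1,Y_2$ is zero, all $U_j^{(\cdot)}$ on the nonzero side must vanish while the others are free. If $Y_1,Y_2$ are both nonzero, write $Y_1=dV_1$, $Y_2=dV_2$ with $\gcd(V_1,V_2)=1$; then $V_1 U_j^{(1)}=V_2 U_j^{(2)}$ forces $V_2\mid U_j^{(1)}$ and $V_1\mid U_j^{(2)}$, so $U_j^{(1)}=V_2 W_j$, $U_j^{(2)}=V_1 W_j$ for a common $W_j$ with $\deg W_j\leqslant 1-\max(\deg V_1,\deg V_2)$, which is possible only when $\deg V_1,\deg V_2\leqslant 1$. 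Enumerating coprime pairs $(V_1,V_2)$ of degree $\leqslant 1$, the admissible $d$ (with $\deg(dV_i)\leqslant k-1$), and the resulting choices of $W_j$, and summing the resulting geometric series in $2^k$, will produce a polynomial in $2^n$ and $2^k$; substituting into \eqref{eq 3.2} at $q=2$ and dividing by $2^{2(2n+k)-(k+1)n}=2^{2k-(k-2)n}$ should reproduce exactly the claimed expression for $\sum_i\Gamma_i 2^{-2i}$. I expect the bookkeeping over the finitely many coprime low-degree pairs $(V_1,V_2)$ and the nested degree constraints to be the delicate step; as a consistency check, one can verify the resulting formula against small values of $n$ and $k$ using \eqref{eq 3.14}.
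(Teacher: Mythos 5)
Your handling of \eqref{eq 3.14} is in the same spirit as the paper, whose entire proof of the lemma is a citation (Lemma 3.3 of [14] together with \eqref{eq 3.9}); likewise your derivation of the first two lines of \eqref{eq 3.15} from \eqref{eq 3.5} and \eqref{eq 3.4} is correct (the exponent bookkeeping $2^{(k-1)n-k}(2^{2n}+2^k-1)=2^{n+k(n-1)}+2^{(k-1)n}-2^{(k-1)n-k}$ checks out). Replacing the citation for the third line by a direct evaluation of $R_{2,n}^{(k)}$ via \eqref{eq 3.2} with $q=2$ is a legitimate and more self-contained route. But the evaluation as you sketch it contains a concrete error that would make the verification fail.

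In the case where $Y_1,Y_2$ are both nonzero, with $Y_i=dV_i$ and $\gcd(V_1,V_2)=1$, you assert that solutions $U_j^{(1)}=V_2W_j$, $U_j^{(2)}=V_1W_j$ are ``possible only when $\deg V_1,\deg V_2\leqslant 1$.'' That is not so: $W_j=0$, i.e. $U_j^{(1)}=U_j^{(2)}=0$ for all $j$, is a solution for \emph{every} nonzero pair $(Y_1,Y_2)$, whatever the degrees of $V_1,V_2$. The pairs whose coprime parts have maximal degree at least $2$ number $(2^k-1)^2-(2^k-1)-6(2^{k-1}-1)=(2^k-2)(2^k-4)=2^{2k}-6\cdot2^k+8$, and since $\sum_i\Gamma_{i}^{\left[2\atop{\vdots \atop 2}\right]\times k}2^{-2i}=2^{kn-3n-2k}R_{2,n}^{(k)}$, dropping them deletes exactly the last three terms $2^{-3n+kn}-6\cdot2^{n(k-3)-k}+8\cdot2^{-3n+k(n-2)}$ of the stated identity. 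The repair is immediate: give every one of the $(2^k-1)^2$ nonzero pairs its single all-zero solution, then add the excess $4^n-1$ for each of the $2^k-1$ pairs with $Y_1=Y_2$ (coprime parts $V_1=V_2=1$) and the excess $2^n-1$ for each of the $6(2^{k-1}-1)$ pairs whose coprime parts have maximal degree $1$; together with the cases where one or both $Y_i$ vanish this yields
\begin{equation*}
R_{2,n}^{(k)}=2^{4n}+3(2^k-1)\cdot2^{2n}+(3\cdot2^k-6)\cdot2^{n}+(2^k-2)(2^k-4),
\end{equation*}
which, multiplied by $2^{kn-3n-2k}$, reproduces the third line of \eqref{eq 3.15} term by term. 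With that correction your argument is complete and, unlike the paper's proof, does not rely on [14] for the moment identities.
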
  
 \begin{proof}
 Lemma 3.3 follows from Lemma 3.3 in Cherly [14] and \eqref{eq 3.9}
 \end{proof}
 We deduce from \eqref{eq 3.14} and \eqref{eq 3.15}  with k=10.
     \begin{equation}
  \label{eq 3.16}
  \Gamma_{i}^{\left[2\atop{\vdots \atop 2}\right]\times 10} = \begin{cases}
1 & \text{if  } i = 0,        \\
 (2^{n}-1)\cdot 3 & \text{if   } i=1,\\
 7\cdot2^{2n}+2023 \cdot 2^{n}-2030 & \text{if   }  i = 2,  \\
 15\cdot 2^{3n}+7035\cdot 2^{2n}
-21210\cdot 2^{n}+14160 & \text{if   }  i = 3,  \\
 31\cdot 2^{4n} +17315\cdot 2^{3n}+568590 \cdot 2^{2n}
-1827440\cdot2^{n}+1241504 & \text{if   }  i=4, \\ 
 63 \cdot 2^{5n}+37107 \cdot2^{4n} +1993950 \cdot2^{3n}\\
 -15266160 \cdot 2^{2n}+31282272 \cdot 2^n -18047232 & \text{if   }  i=5, \\
  127\cdot 2^{6n}+72723\cdot2^{5n}+ 4120830\cdot 2^{4n}  -24883824\cdot 2^{3n}\\
 +18602976 \cdot 2^{2n}+54302976\cdot2^{n} -52215808 & \text{if   }  i=6, \\
 255\cdot 2^{7n}  +127635 \cdot 2^{6n}  +5117310 \cdot 2^{5n} -67607280 \cdot 2^{4n}\\
    + 39863520\cdot 2^{3n}+1210256640 \cdot 2^{2n}  -3062415360\cdot 2^{n} +1874657280  & \text{if   }  i=7, \\
     511\cdot2^{8n} +a_{7}^{(8)}\cdot2^{7n}  +a_{6}^{(8)}\cdot2^{6n}  +a_{5}^{(8)}\cdot2^{5n}+ a_{4}^{(8)}\cdot 2^{4n} \\ +
 a_{3}^{(8)}\cdot 2^{3n}+a_{2}^{(8)} \cdot 2^{2n}+ a_{1}^{(8)}\cdot2^{n} +a_{0}^{(8)}& \text{if   }  i=8, \\
  1023\cdot2^{9n} +a_{8}^{(9)}\cdot2^{8n} +a_{7}^{(9)}\cdot2^{7n}  +a_{6}^{(9)}\cdot2^{6n}  +a_{5}^{(9)}\cdot2^{5n}+ a_{4}^{(9)}\cdot 2^{4n} \\ +
 a_{3}^{(9)}\cdot 2^{3n}+a_{2}^{(9)} \cdot 2^{2n}+ a_{1}^{(9)}\cdot2^{n} +a_{0}^{(9)}& \text{if   }  i=9. \\
  2^{11n}-1023\cdot2^{9n}+a_{8}^{(10)}\cdot2^{8n} +a_{7}^{(10)}\cdot2^{7n}  +a_{6}^{(10)}\cdot2^{6n}  +a_{5}^{(10)}\cdot2^{5n}+ a_{4}^{(10)}\cdot 2^{4n} \\ + a_{3}^{(10)}\cdot 2^{3n}+a_{2}^{(10)} \cdot 2^{2n}+ a_{1}^{(10)}\cdot2^{n} +a_{0}^{(10)}& \text{if   }  i=10. \\
\end{cases}    
  \end{equation}
where, \\
 \begin{equation}
  \label{eq 3.17}
 \begin{cases} 
 \displaystyle  \sum_{i = 0}^{10} \Gamma_{i}^{\left[2\atop{\vdots \atop 2}\right]\times 10}  = 2^{11n}, \\ 
  \displaystyle  \sum_{i = 0}^{10} \Gamma_{i}^{\left[2\atop{\vdots \atop 2}\right]\times 10} 2^{10-i}  = 2^{11n}+1023\cdot 2^{9n},\\
  \displaystyle \sum_{i = 0}^{10} \Gamma_{i}^{\left[2\atop{\vdots \atop 2}\right]\times 10} 2^{20-2i}  =
   2^{11n}+3069\cdot2^{9n} +3066\cdot2^{8n} +1042440\cdot2^{7n}. 
\end{cases}
    \end{equation}
 Combining \eqref{eq 3.16} and \eqref{eq 3.17} we compute $a_{i}^{(j)}$ in \eqref{eq 3.16}  for $8\leqslant j\leqslant 10 , \; 0 \leqslant i \leqslant  j-1 $\\
 and we obtain from  \eqref{eq 3.16}  \\
    \begin{equation}
  \label{eq 3.18}
  \Gamma_{i}^{\left[2\atop{\vdots \atop 2}\right]\times 10} = \begin{cases}
1 & \text{if  } i = 0,        \\
 (2^{n}-1)\cdot 3 & \text{if   } i=1,\\
 7\cdot2^{2n}+2023 \cdot 2^{n}-2030 & \text{if   }  i = 2,  \\
 15\cdot 2^{3n}+7035\cdot 2^{2n}
-21210\cdot 2^{n}+14160 & \text{if   }  i = 3,  \\
 31\cdot 2^{4n} +17315\cdot 2^{3n}+568590 \cdot 2^{2n}
-1827440\cdot2^{n}+1241504 & \text{if   }  i=4, \\ 
 63 \cdot 2^{5n}+37107 \cdot2^{4n} +1993950 \cdot2^{3n}\\
 -15266160 \cdot 2^{2n}+31282272 \cdot 2^n -18047232 & \text{if   }  i=5, \\
  127\cdot 2^{6n}+72723\cdot2^{5n}+ 4120830\cdot 2^{4n}  -24883824\cdot 2^{3n}\\
 +18602976 \cdot 2^{2n}+54302976\cdot2^{n} -52215808 & \text{if   }  i=6, \\
  255\cdot 2^{7n}  +127635 \cdot 2^{6n}  +5117310 \cdot 2^{5n}\\
  -67607280 \cdot 2^{4n} + 39863520\cdot 2^{3n}+1210256640 \cdot 2^{2n} \\
     -3062415360\cdot 2^{n} +1874657280  & \text{if   }  i=7, \\
     511\cdot2^{8n} +171955\cdot2^{7n} -897890\cdot2^{6n}  -38376240\cdot2^{5n}\\
     + 323250144\cdot 2^{4n} + 271514880\cdot 2^{3n}-436135\cdot2^{14} \cdot 2^{2n}\\
      + 242795\cdot2^{16}\cdot2^{n} -4445\cdot2^{21} & \text{if   }  i=8, \\
       1023\cdot2^{9n} -1533\cdot2^{8n} -517650\cdot2^{7n}  +1798320\cdot2^{6n} \\
  +78214752\cdot2^{5n}-559464192\cdot 2^{4n}  -783237120 \cdot 2^{3n}
    \\+200235\cdot2^{16} \cdot 2^{2n}-106680\cdot2^{18}\cdot2^{n}+ 480\cdot2^{25}& \text{if   }  i=9. \\
   2^{11n}-1023\cdot2^{9n}+1022\cdot2^{8n} +345440\cdot2^{7n}  -1028192\cdot2^{6n} \\
   -45028608\cdot2^{5n}+ 299663360\cdot 2^{4n}  + 494731264\cdot 2^{3n}\\
   -27432\cdot2^{18} \cdot 2^{2n}+ 57344\cdot2^{18}\cdot2^{n} -256\cdot2^{25}& \text{if   }  i=10. \\
\end{cases}    
  \end{equation}
  
    \begin{example}
  \textbf{Computation of  $ R_{q,n}^{(k)} $ in the case k=10, q=4\; (see \eqref{eq 3.2} and \eqref{eq 3.3}) }
     The number denoted by $ R_{4,n}^{(10)} $ of solutions \\
  
 $(Y_1,U_{1}^{(1)},U_{2}^{(1)}, \ldots,U_{n}^{(1)}, Y_2,U_{1}^{(2)},U_{2}^{(2)}, 
\ldots,U_{n}^{(2)}, Y_3,U_{1}^{(3)},U_{2}^{(3)}, \ldots,U_{n}^{(3)}, Y_4,U_{1}^{(4)},U_{2}^{(4)}, \ldots,U_{n}^{(4)}   ) \in (\mathbb{F}_{2}[T])^{4(n+1)}$ \vspace{0.5 cm}\\
 of the polynomial equations  \vspace{0.5 cm}
  \[\left\{\begin{array}{c}
 Y_{1}U_{1}^{(1)} + Y_{2}U_{1}^{(2)} + Y_{3}U_{1}^{(3)} +Y_{4}U_{1}^{(4)} = 0  \\
    Y_{1}U_{2}^{(1)} + Y_{2}U_{2}^{(2)}  + Y_{3}U_{2}^{(3)} + Y_{4}U_{2}^{(4)}= 0\\
    \vdots \\
   Y_{1}U_{n}^{(1)} + Y_{2}U_{n}^{(2)} + Y_{3}U_{n}^{(3)} + Y_{4}U_{n}^{(4)}= 0 
 \end{array}\right.\]
 
  satisfying the degree conditions \\
                   $$  degY_i \leq 9,
                   \quad degU_{j}^{(i)} \leq 1, \quad  for \quad 1\leq j\leq n,   \quad 1\leq i \leq 4 $$ \\
  is equal to

  \begin{align*}
 R_{q,n}^{(k)} =  2^{q(2n+k) - (k+1)n}\sum_{i = 0}^{\inf(2n,k)}
  \Gamma_{i}^{\left[2\atop{\vdots \atop 2}\right]\times k} 2^{-iq}
   = R_{4,n}^{(10)} = 
   2^{40-3n}\sum_{i = 0}^{10} \Gamma_{i}^{\left[2\atop{\vdots \atop 2}\right]\times 10} 2^{-i4}\\
    = 2^{40-3n}\cdot \bigg[ 2^{-40}\cdot 2^{11n} \\
     +(-1023\cdot 2^{-40}+1023\cdot 2^{-36})\cdot2^{9n}\\
     +(1022\cdot 2^{-40}-1533\cdot 2^{-36}+511\cdot 2^{-32})\cdot2^{8n}\\
  +(345440\cdot2^{-40}-517650\cdot 2^{-36}  + 171955\cdot 2^{-32}+255\cdot 2^{-28})\cdot2^{7n}\\
  +(-1028192\cdot2^{-40}+1798320\cdot2^{-36}  -897890\cdot2^{-32}\\+127635\cdot2^{-28}+127\cdot2^{-24})\cdot2^{6n}\\ 
  +(-45028608\cdot2^{-40}+78214752\cdot2^{-36}  -38376240\cdot2^{-32}\\
 +5117310\cdot2^{-28}+72723\cdot2^{-24}+63\cdot2^{-20})\cdot2^{5n}\\
  +(299663360\cdot2^{-40}-559464192\cdot2^{-36}  +323250144\cdot2^{-32}\\
  -67607280\cdot2^{-28}+4120830\cdot2^{-24}+37107\cdot2^{-20}+31\cdot2^{-16})\cdot2^{4n}\\
 +(494731264\cdot2^{-40}-783237120\cdot2^{-36}  +271514880\cdot2^{-32}\\
  39863520\cdot2^{-28}-24883824\cdot2^{-24}+1993950\cdot2^{-20}+17315\cdot2^{-16}+15\cdot2^{-12})\cdot2^{3n}
 \bigg] \\
  = 2^{8n}+15345\cdot 2^{6n}+107310\cdot 2^{5n}+37128000 \cdot 2^{4n}+329001120\cdot 2^{3n}\\+67088385\cdot2^{8}\cdot2^{2n}
  +26043255\cdot2^{12}\cdot2^{n}+2^{16}\cdot14881860.
  \end{align*}
 \textbf{ The case n=1:}\\
 
  $  R_{4,1}^{(10)}= 2^{8}+15345\cdot 2^{6}+107310\cdot 2^{5}+37128000 \cdot 2^{4}+329001120\cdot 2^{3}\\+67088385\cdot2^{8}\cdot2^{2}
  +26043255\cdot2^{12}\cdot2+2^{16}\cdot14881860=587\cdot2^{31}$\\
  Equally we obtain : \\ $ \displaystyle \quad R_{4,1}^{(10)}=   2^{37}\sum_{i = 0}^{2} \Gamma_{i}^{2 \times 10} 2^{-i4} = 
  2^{37}\cdot [1+3\cdot2^{-4}+2044\cdot2^{-8}] =587\cdot2^{31} $ see [3],[4]\\
 \textbf{ The case n=2:}\\
 
  $  R_{4,2}^{(10)}= 2^{16}+15345\cdot 2^{12}+107310\cdot 2^{10}+37128000 \cdot 2^{8}+329001120\cdot 2^{6}\\+67088385\cdot2^{8}\cdot 2^{4}
  +26043255\cdot2^{12}\cdot2^{2}+2^{16}\cdot 14881860=6361\cdot2^{28}$\\
  Equally we obtain : \\ $ \displaystyle \quad R_{4,2}^{(10)}=   2^{34}\sum_{i = 0}^{4} \Gamma_{i}^{\left[2\atop 2\right]\times 10} 2^{-i4}= 
  2^{34}\cdot [1+9\cdot2^{-4}+6174\cdot2^{-8}+42840\cdot2^{-12}+4145280\cdot2^{-16}] =6361\cdot2^{28} $ see [5]\\
  
 \textbf{ The case n=3:}\\
 
 $ R_{4,3}^{(10)}=
   2^{24}+15345\cdot 2^{18}+107310\cdot 2^{15}+37128000 \cdot 2^{12}+329001120\cdot 2^{9}\\+67088385\cdot2^{8}\cdot2^{6}
  +26043255\cdot2^{12}\cdot2^{3}+2^{16}\cdot14881860 = 1552553\cdot2^{21}$\\
   Equally we obtain : \\ $ \displaystyle \quad R_{4,3}^{(10)}=   2^{31}\sum_{i = 0}^{6} \Gamma_{i}^{\left[2\atop{ 2\atop 2}\right]\times 10} 2^{-i4}= 
  2^{31}\cdot [1+21\cdot2^{-4}+14602\cdot2^{-8}+302400\cdot2^{-12}+32004000\cdot2^{-16}+430133760\cdot2^{-20}+8127479808\cdot2^{-24}] =1552553\cdot2^{21} $ see [6]\\

  \end{example}

  \newpage

\end{document}